\documentclass[11pt,letterpaper]{article}
\usepackage[margin=1in]{geometry}
\usepackage{amsmath,amssymb,amsthm}
\usepackage{mathrsfs}
\usepackage{tikz}
\usepackage{xcolor}
\usepackage{needspace}

\usepackage[colorlinks=true,linkcolor=.,citecolor=.,urlcolor=.]{hyperref}
\usepackage[numbers]{natbib}  

\hypersetup{%
  bookmarksnumbered=true,%
  bookmarks=true,%
  colorlinks=true,%
  linkcolor=blue,%
  citecolor=blue,%
  filecolor=blue,%
  menucolor=blue,%
  pagecolor=blue,%
  urlcolor=blue,%
  pdfnewwindow=true,%
  pdfstartview=FitBH}

\newtheorem{theorem}{Theorem}[section]
\newtheorem{definition}[theorem]{Definition}
\newtheorem{lemma}[theorem]{Lemma}
\newtheorem{corollary}[theorem]{Corollary}
\newtheorem{remark}[theorem]{Remark}
\newcommand{\G}{\mathcal G}
\newcommand{\V}{\mathcal V}
\newcommand{\E}{\mathcal E}
\newcommand{\FSF}{\mathrm{FSF}}
\newcommand{\WSF}{\mathrm{WSF}}
\newcommand{\PP}{\mathbb P}
\newcommand{\EE}{\mathbb E}
\newcommand{\cyc}{\Diamond}

\newcommand{\iF}{i^e_{\mathrm F}}
\title{A zero--one law for swap-invariant events in uniform spanning forests}
\author{Xinyi Li\thanks{Peking University, Beijing, China. \href{mailto:xinyili@bicmr.pku.edu.cn}{xinyili@bicmr.pku.edu.cn}} \qquad Yao Yu\thanks{The University of Chicago, Chicago, USA. \href{mailto:yaoyu@uchicago.edu}{yaoyu@uchicago.edu}}}
\date{\today}
\begin{document}
\maketitle
\begin{abstract}
We prove that every event invariant under swaps has probability zero or one for either the free or the wired spanning forest on any connected, locally finite, infinite network. Applications include the constancy of the number of components and of the total number of excessive ends. For the free forest, these results resolve longstanding questions of Benjamini, Lyons, Peres, and Schramm \cite[Question~15.7]{BLPS} on constancy of the component count and of Lyons, Morris, and Schramm \cite[Question~7.7]{LMS} on the zero--one law for one-endedness. In the latter case, we obtain the stronger conclusion that the total number of excessive ends is deterministic.
\end{abstract}

\section{Introduction}
Let $\G=(\V,\E,\mathfrak{c})$ be a connected, locally finite, infinite network, with conductances $\mathfrak{c}:\E\to(0,\infty)$. For a finite connected subgraph, the weighted spanning tree measure assigns to each spanning tree a probability proportional to the product of its edge conductances; when $\mathfrak{c}\equiv1$, this is the uniform spanning tree measure. Along any increasing exhaustion of $\G$ by finite connected subgraphs, these measures converge weakly to the \textbf{free spanning forest} measure, denoted by FSF. Identifying the exterior of each subgraph to one extra vertex before taking the limit gives the \textbf{wired spanning forest} measure, denoted by WSF. Both limits are independent of the exhaustion. 

The number of components has been central since Pemantle \cite{Pemantle}. On $\mathbb Z^d$, he proved that the free and wired measures agree and that the resulting forest is connected a.s.\ for $d\leq4$, whereas it has infinitely many components a.s.\ for $d\geq5$. Benjamini, Lyons, Peres, and Schramm \cite{BLPS} developed the theory on general networks. Their Theorem~9.4 identifies the number of WSF components via non-intersection probabilities for independent random walks; in particular, the number is a.s.\ constant. They asked whether the same constancy holds for the FSF on every infinite network \cite[Question~15.7]{BLPS}. The question also appears as Question~10.28 in Lyons and Peres \cite{LP}.

Previously, the FSF component-count question was known only in special cases. If the free and wired measures agree, constancy follows from the WSF theorem. If the automorphism group of the network has an infinite orbit, it follows from ergodicity \cite[Remark~9.8]{BLPS}. For a general network, tail triviality gives the weaker conclusion that the number of FSF components is either a.s.\ finite or a.s.\ infinite \cite[Remark~9.8]{BLPS}. Related work establishes stronger conclusions under additional hypotheses: Hutchcroft and Nachmias \cite{HN}, and independently Tim\'ar \cite{Timar}, proved the one-or-infinitely-many alternative for the FSF on unimodular transitive graphs, and Hutchcroft and Nachmias proved it more generally on unimodular random rooted networks with finite expected conductance at the root \cite[Theorem~1.10]{HN}; Hutchcroft and Nachmias \cite{HNplanar} also proved connectivity on bounded-degree proper plane graphs with edge conductances bounded above. 

A second longstanding question concerns the ends of the FSF. Lyons, Morris, and Schramm \cite[Question~7.7]{LMS} asked whether the event that every component is one-ended has probability zero or one, for both the WSF and the FSF. The same question is recorded as Question~10.59 in Lyons and Peres \cite{LP}. Question~7.8 of Lyons, Morris, and Schramm asks about constancy of the total number of excessive ends for the WSF, which implies the wired case of Question~7.7; Hutchcroft \cite[Theorem~2.2]{Hutchcroft} settled Question~7.8 using the interlacement Aldous--Broder construction.

In this work, we prove a zero--one law for all events whose occurrence is unchanged whenever one present edge is removed and one absent edge is added. 
Applied to the component count, this answers Question~15.7 of Benjamini, Lyons, Peres, and Schramm. Applied to the total number of excessive ends, i.e.\ the sum over components of their numbers of ends minus one, it answers the FSF case of Question~7.7 of Lyons, Morris, and Schramm and strengthens its conclusion. The same argument recovers the corresponding WSF results. It also gives zero--one laws for the events that every component is recurrent or that every component has finitely many ends; see Section~\ref{sec:applications}. Throughout, every infinite number of ends is recorded as $\infty$.

\medskip

We now state the theorem precisely. 
\begin{definition}
A \emph{swap} replaces one edge of a spanning forest by one previously absent edge. 
An event $A$ is \textbf{swap-invariant} if
\[
\mathbf1_A(F)=\mathbf1_A(F')
\quad\text{for any spanning forests $F$ and $F'$ with }|F\setminus F'|=|F'\setminus F|=1.
\]
\end{definition}

\begin{theorem}\label{thm:main}
For a connected, locally finite, infinite network $\G$, let $\mu$ be its FSF (resp.\ WSF) measure. Every swap-invariant event $A$ satisfies $\mu(A)\in\{0,1\}$. 
\end{theorem}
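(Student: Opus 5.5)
We propose to derive Theorem~\ref{thm:main} from tail triviality of $\mu$ together with an approximation argument. Tail triviality for both FSF and WSF is known (see \cite{BLPS}). The idea is that a swap-invariant event, after conditioning on the configuration inside a large finite set $K$, should become insensitive to that configuration. Fix an exhaustion $K_1\subset K_2\subset\cdots$ of $\G$ by finite connected sets. Let $\mathcal F_n$ be the $\sigma$-field generated by the edges outside $E(K_n)$, so that $\bigcap_n \mathcal F_n$ is the tail $\sigma$-field. It suffices to show that $\mathbf 1_A$ is, up to null sets, $\mathcal F_n$-measurable for every $n$. Then $A$ is a tail event, hence trivial.

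To show this, fix $n$ and condition on $\omega:=F\cap(\E\setminus E(K_n))$. By the spatial Markov property of spanning forest measures, the conditional law of $F\cap E(K_n)$ given $\omega$ is, for a.e.\ $\omega$, a weighted spanning-forest measure on a finite graph. This finite graph is $K_n$ with boundary vertices identified according to the connectivity induced by $\omega$ (for WSF this includes connections through infinity). There is one extra constraint for FSF: the forest inside must not create cycles together with $\omega$. In each case the admissible inner configurations are the bases of a matroid, namely a graphic matroid of a contracted finite multigraph. These are exactly the completions $\eta$ for which $\eta\cup\omega$ is a spanning forest in the support.

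The key combinatorial step is the basis exchange property. Any two bases $\eta,\eta'$ of a matroid are joined by a sequence of single exchanges, each through bases. Each exchange $\eta\cup\omega\to\eta''\cup\omega$ removes one edge and adds one edge, so it is a swap between spanning forests. Swap-invariance then gives $\mathbf 1_A(\eta\cup\omega)=\mathbf 1_A(\eta'\cup\omega)$, so $\mathbf 1_A$ depends only on $\omega$ on the support. Hence $\mathbf 1_A=\mu(A\mid\mathcal F_n)$ a.s.

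We expect the main obstacle to be making the conditional description rigorous for every $\omega$ in a full-measure set, especially for the FSF. There the outside configuration can impose constraints coming from infinity (ends, or "free" connections of the limiting procedure). Conditioning on $\omega$ is also conditioning on a null event, so the conditional measure is defined only via limits. We would address this as follows. Work with finite approximations $\mu_m$ on $K_m$, with $m>n$. For those, the conditional law given the edges in $K_m\setminus K_n$ is exactly uniform on the bases of a finite matroid, by elementary counting. Then pass $m\to\infty$ by weak convergence. We must also check that the limiting conditional law is supported on bases of the correct matroid, i.e.\ that no "extra" configurations appear in the limit that fail to connect by swaps to the others. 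If this direct route fails, a fallback is to show that $\mathbf 1_A$ is invariant under the finite-energy resampling of each edge pair inside $K_n$. This would use that $\mu$ and its image under a swap map are mutually absolutely continuous, and would then yield the same $\mathcal F_n$-measurability.
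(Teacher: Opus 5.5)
\paragraph{Gap in the conditional-law step.} Your reduction to tail triviality is sound. The step that carries all the weight is unproved, and the concrete form you give it is false.

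You need that, for $\mu$-a.e.\ $\omega=F\setminus E(K_n)$, all completions in the support of the conditional law of $F\cap E(K_n)$ given $\omega$ are linked by swaps. Conditioning on the whole outside configuration means conditioning on infinitely many edges. Contraction and deletion only describe finitely many, and the information about which infinite pieces of $\omega$ may or must be joined inside $K_n$ lives at infinity. Your descriptions mishandle exactly this.

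\begin{itemize}
\item For the FSF, spanning trees of $K_n$ with boundary vertices identified only by $\omega$-connectivity would force every piece of $\omega$ meeting $K_n$ into one tree. This fails already on $\mathbb Z^5$, where FSF and WSF coincide and have infinitely many one-ended trees, several of which meet a large box with positive probability. If instead you admit every $\eta$ with $\eta\cup\omega$ acyclic, the admissible sets have different sizes and are not the bases of a matroid.
\item For the WSF, wiring all infinite pieces ``through infinity'' is wrong on $\mathbb Z^5$ with two rays attached, the Lyons--Morris--Schramm example quoted in the paper. Each ray lies in the WSF a.s., so its tail is an infinite piece of $\omega$ that the inside configuration must join to an infinite piece in $\mathbb Z^5$.
\end{itemize}

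More fundamentally, your argument needs $|F\cap E(K_n)|$ to be a.s.\ a function of $\omega$. Two completions of different sizes can never be linked by finitely many swaps, because a swap preserves $|F\setminus F_0|-|F_0\setminus F|$ whenever $F\triangle F_0$ is finite. This is a rigidity property of the infinite-volume conditional law, and the proposal does not address it.

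\paragraph{The repairs, and the paper's route.} Your suggested repairs do not close the gap. Under $\mu_m$ the conditional law given $E(K_m)\setminus E(K_n)$ is indeed a weighted base family, so $\mathbf 1_A$ is $\mu_m$-a.s.\ determined by the configuration off $E(K_n)$. But $A$ is not a cylinder event, and neither this measurability nor the support of a conditional law given infinitely many coordinates survives a weak limit. In the fallback, invariance of $\mathbf 1_A$ under swaps is already your hypothesis. What is missing is that swap moves inside $K_n$ act transitively on the conditional support, which is the same problem.

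The paper avoids conditioning on infinitely many edges altogether. It reveals edges one at a time, so contraction and deletion identify each conditional law exactly as an FSF or WSF of a modified network. Lyons' codimension-one coupling (Theorem~\ref{thm:coupling}) then couples the laws given $e\in F$ and $e\notin F$ so that they differ by a single swap. The partner edge of that swap may lie anywhere in the network, not in a prescribed window. This global swap makes the edge-revelation martingale constant, so its limit $\mathbf 1_A$ is a.s.\ equal to $\mu(A)$, with no appeal to tail triviality.
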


Hutchcroft \cite[Conjecture~7.4]{Hutchcroft} proposed a zero--one law for update-stable events of either forest. The updates considered there add a specified absent edge and remove a suitably chosen present edge, so that the updated forest has a law absolutely continuous with respect to the original forest law. Update stability means that each such random update preserves the event a.s. Our theorem makes no absolute-continuity assumption on an update rule: it requires the event to be unchanged under every admissible swap. This implies stability under Hutchcroft's updates.  The converse is not established here, so we do not claim the full conjecture. Since every tail event is swap-invariant, our theorem also contains the tail triviality of both forests \cite[Theorem~8.3]{BLPS}.

\paragraph{Sketch of proof strategy.}
The starting observation is that, for an edge with inclusion probability strictly between zero and one, the conditional forests obtained by requiring it present or absent can be coupled to differ by one deletion and one insertion. Every swap-invariant event therefore has the same probability under both laws. The argument combines finite spanning-tree edge exchange, the transfer-current representation of Burton and Pemantle \cite{BP} and its forest version \cite[Theorem~10.15]{LP}, and the codimension-one coupling for projection determinantal probability measures (PDPMs) \cite[Proposition~10.3]{Lyons}. We use contraction and deletion for both the FSF and the WSF. 
The same argument applies after finitely many edge states are revealed. In addition, edges with conditional inclusion probability zero or one reveal no further information. The edge-revelation martingale is thus constant, and its limit is the event indicator.

\paragraph{Acknowledgments.} The authors thank Shiping Cao, Ewain Gwynne, and Yuval Peres for inspiring discussions. XL is supported by Beijing Natural Science Foundation (JQ26001) and the National Key R\&D Program of China (No.~2021YFA1002700). YY is grateful to the REU program at the University of Chicago, where she first learned about this problem.
\paragraph{AI Usage Disclosure.} The development of the key idea in this work benefited from  discussions with commercial large language models. The authors take full responsibility for the correctness of this work.

\section{Proof of the main theorem}
We first recall the projection determinantal representation and the coupling result used in the proof. We then compare the conditional forests obtained by requiring a fixed edge to be present or absent, extend the comparison to finite conditioning, and finish with the martingale argument.

\begin{definition}
Let $I$ be an at most countable index set. A probability measure $\PP$ on random subsets $A\subseteq I$ is called a determinantal probability measure (DPM) if there is a kernel $Q$ such that
\[
\PP[B\subseteq A]=\det(Q_B)\qquad\text{for every finite }B\subseteq I,
\]
where $Q_B$ is the submatrix with rows and columns indexed by $B$.

Given positive weights $w:I\to(0,\infty)$, the Hilbert space $\ell^2(I,w^{-1})$ is defined with inner product 
\[
\langle f,g\rangle=\sum_{x\in I}w^{-1}(x)f(x)g(x).
\]
For any closed $H\subseteq\ell^2(I,w^{-1})$, the \textbf{projection determinantal probability measure} (PDPM) $\PP^H$ is defined as the DPM with orthogonal projection kernel $Q_H(x,y)=(P_H\mathbf1_x)(y)$, where $\mathbf1_x$ is the coordinate vector at $x$. 
For a network, we take $I$ to be its edge set and $w$ its conductances. 
\end{definition}

PDPMs admit the following coupling. 
\begin{theorem}[Proposition 10.3 of \cite{Lyons}]\label{thm:coupling}
Let $I$ be at most countable and $w:I\to(0,\infty)$. Let $H_1\subset H_2$ be closed subspaces of $\ell^2(I,w^{-1})$ with $\dim(H_2/H_1)=1$. There is a coupling $(A_1,A_2)$ of $\PP^{H_1}$ and $\PP^{H_2}$ such that
$A_1\subset A_2\ \text{and}\ |A_2\setminus A_1|=1\ \text{a.s}.$
\end{theorem}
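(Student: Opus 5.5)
We would prove Theorem~\ref{thm:coupling} in two stages: first for finite-dimensional $H_2$, where the sample sizes are deterministic, and then in general by approximation and a weak limit. Write $H_2=H_1\oplus\mathrm{span}(u)$ with $u\perp H_1$ and $\|u\|=1$. Then $P_{H_2}=P_{H_1}+P_u$, so the kernels satisfy $Q_{H_1}\le Q_{H_2}$ as positive contractions. The two inputs we would take from the general theory of DPMs in \cite{Lyons} are the following.
\begin{enumerate}
\item[(i)] If $Q_1\le Q_2$ are positive contractions, then $\PP^{Q_1}$ is stochastically dominated by $\PP^{Q_2}$. This comes from negative association, Lyons' Theorem~6.2.
\item[(ii)] If $\dim H=k<\infty$, then $|A|=k$ a.s.\ under $\PP^H$. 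Indeed $\EE|A|=\operatorname{tr}P_H=k$, and $\PP^H$ is supported on sets of size $k$ by the Gram-determinant description.
\end{enumerate}

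\emph{Finite-dimensional case.} Suppose $\dim H_1=k<\infty$. By (i) and Strassen's theorem, applied on the compact space $2^I$ with the product order, there is a monotone coupling $(A_1,A_2)$ with $A_1\subseteq A_2$. By (ii), $|A_1|=k$ and $|A_2|=k+1$ a.s. Hence $|A_2\setminus A_1|=1$ a.s., and this case is done.

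\emph{General case.} Choose an orthonormal basis $e_1,e_2,\dots$ of $H_1$, which is at most countable since $I$ is. Set $H_1^{(m)}=\mathrm{span}(e_1,\dots,e_m)$ and $H_2^{(m)}=H_1^{(m)}\oplus\mathrm{span}(u)$. The projections converge strongly, so every kernel entry $Q_{H_i^{(m)}}(x,y)$ converges to $Q_{H_i}(x,y)$. Consequently the finite-dimensional cylinder probabilities $\det(Q_B)$ converge, and $\PP^{H_i^{(m)}}\to\PP^{H_i}$ weakly; inclusion--exclusion over finite $B$ handles general cylinders. Let $(A_1^{(m)},A_2^{(m)})$ be the couplings from the finite-dimensional case. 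These live on the compact space $2^I\times 2^I$, so a subsequence converges weakly to some coupling $(A_1,A_2)$ of $\PP^{H_1}$ and $\PP^{H_2}$.

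For each finite $B\subseteq I$, the conditions $A_1\cap B\subseteq A_2\cap B$ and $|(A_2\setminus A_1)\cap B|\le1$ are clopen cylinder events. They hold with probability one along the sequence, hence also in the limit. Taking $B\uparrow I$ gives $A_1\subseteq A_2$ and $|A_2\setminus A_1|\le1$ a.s.

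For the matching lower bound we would compute expectations:
\[
\EE|A_2\setminus A_1|=\sum_{x\in I}\bigl(Q_{H_2}(x,x)-Q_{H_1}(x,x)\bigr)=\sum_x (P_u\mathbf 1_x)(x)=\sum_x w^{-1}(x)u(x)^2=\|u\|^2=1.
\]
The second equality uses $A_1\subseteq A_2$; the third uses that $(P_u\mathbf1_x)(x)=w^{-1}(x)u(x)^2$ in $\ell^2(I,w^{-1})$; the sum is justified by monotone convergence. A random variable with values in $\{0,1\}$ and mean $1$ equals $1$ a.s., so $|A_2\setminus A_1|=1$ a.s.

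The main obstacle is the limiting step. The exact-cardinality argument from the finite-dimensional case is unavailable once $A_1,A_2$ are infinite, and weak limits only preserve closed local constraints. That is why we split the conclusion into the closed condition $|A_2\setminus A_1|\le1$, which survives the limit, and the trace identity, which forces equality. The other point to check is that (i) indeed gives domination for projection kernels on countable $I$. If needed, we would first restrict to finite $I_n\subseteq I$, where $Q_{H_1}|_{I_n}\le Q_{H_2}|_{I_n}$ still holds, and then pass to the projective limit.
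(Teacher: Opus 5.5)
The paper gives no proof of this statement. It imports it from Lyons and uses it as a black box in Lemma~\ref{lem:swap}, so your proposal is effectively a reconstruction, and it is correct.

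Your argument rests on two standard inputs: stochastic domination for nested projection DPMs, and Strassen's theorem on the compact ordered space $2^I$. You also isolate the one genuine difficulty. When $H_1$ is infinite-dimensional, an arbitrary monotone coupling supplied by Strassen need not satisfy $|A_2\setminus A_1|\le 1$. The coupling therefore has to be built as a weak limit of finite-dimensional couplings, where cardinality forces that constraint. Equality is then recovered from two facts: the closed local condition $|A_2\setminus A_1|\le 1$ survives the limit, and the trace identity $\sum_x w^{-1}(x)u(x)^2=\|u\|^2=1$ holds. Your choice $H_2^{(m)}=H_1^{(m)}\oplus\mathrm{span}(u)$ makes the approximating pairs nested with codimension one and gives strong convergence of both projections, so the limiting step goes through as you wrote it.

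Two small points of bookkeeping.
\begin{itemize}
\item The kernel $Q_H(x,y)=(P_H\mathbf 1_x)(y)$ is not symmetric when $w$ is nonconstant. To quote results stated on unweighted $\ell^2(I)$, conjugate by the unitary $f\mapsto w^{-1/2}f$. This changes the kernel only by a diagonal similarity, so all principal minors, and hence the measures and the operator order, are unchanged.
\item You only ever apply (i) to nested finite-dimensional subspaces $H_1^{(m)}\subset H_2^{(m)}$. The positive-contraction form of domination, and the detour through restrictions to finite $I_n$, are therefore not really needed. If you want to stay with finite index sets, you can instead truncate the finitely many spanning vectors of $H_2^{(m)}$ to $I_n$. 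This keeps the pair nested and makes the projections converge in norm.
\end{itemize}
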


The next theorem identifies the FSF as a PDPM.
\begin{theorem}[Theorem 10.15 and Section 9.1 of \cite{LP}]\label{thm:FSF}
Let $G=(V,E,c)$ be a network, possibly with multiple edges and self loops. The FSF of $G$ is a DPM with kernel $Q=(\iF(f))_{e,f\in E}$, where $\iF$ is the unit free current from the tail of $e$ to its head. Let $\cyc(G)$ be the closed span of the unit flows around finite cycles\footnote{The unit flow of a cycle is $+1$ on edges traversed along their chosen orientation, $-1$ otherwise, and $0$ off the cycle. } in $\ell^2(E,c^{-1})$. Since $\iF:=P_{\cyc(G)^\perp}\mathbf1_e$, the FSF has law $\PP^{\cyc(G)^\perp}$, thus a PDPM.
\end{theorem}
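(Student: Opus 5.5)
The plan is to obtain the statement from the finite transfer-current theorem of Burton and Pemantle \cite{BP} by passing to the limit along an exhaustion. On each finite network the relevant projection kernel will converge entrywise to $P_{\cyc(G)^\perp}\mathbf1_e$. Fix an increasing exhaustion $G_n=(V_n,E_n)$ of $G$ by finite connected subnetworks, keeping all multiple edges and loops among the vertices of $V_n$. By definition, the FSF is the weak limit of the weighted spanning tree measures $\mathsf{UST}_{G_n}$. The law of a random subset of the countable set $E$ is determined by the numbers $\PP[B\subseteq A]$ for finite $B$, by inclusion--exclusion on cylinder events. It therefore suffices to prove
\[
\mathsf{UST}_{G_n}[B\subseteq T]\to\det\big(Q_B\big),\qquad Q(e,f)=(P_{\cyc(G)^\perp}\mathbf1_e)(f),
\]
for every finite $B\subseteq E$.

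\emph{Step 1 (finite case).} On the finite network $G_n$, the transfer-current theorem states that $\mathsf{UST}_{G_n}[B\subseteq T]=\det\big(Y_n(e,f)\big)_{e,f\in B}$. Here $Y_n(e,f)$ is the unit current flow from $e^-$ to $e^+$, evaluated on $f$. In $\ell^2(E_n,c^{-1})$ this current is the orthogonal projection of $\mathbf1_e$ onto the star space, which equals $\cyc(G_n)^\perp$. It is orthogonal to all cycle flows by Kirchhoff's cycle law, and $\mathbf1_e$ minus it lies in $\cyc(G_n)$ by the node law together with finite-dimensional linear algebra. I would check this identification carefully with the weight $c^{-1}$, because that weight is what makes the current (rather than the voltage) the projection. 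A loop $e$ has cycle flow $\mathbf1_e$, so $Y_n(e,\cdot)=0$, which is consistent with loops never appearing in a spanning tree. Multiple edges cause no issue.

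\emph{Step 2 (convergence of projections).} Embed $\ell^2(E_n,c^{-1})$ into $\ell^2(E,c^{-1})$ by extending functions by zero. Each finite cycle of $G_n$ is a finite cycle of $G$, so $\cyc(G_n)\subseteq\cyc(G_{n+1})\subseteq\cyc(G)$. Every finite cycle of $G$ lies in some $G_n$, so $\bigcup_n\cyc(G_n)$ is dense in $\cyc(G)$. For an increasing sequence of closed subspaces with dense union, the projections converge strongly: $P_{\cyc(G_n)}\to P_{\cyc(G)}$. For $e\in E_n$ we have $Y_n(e,\cdot)=\mathbf1_e-P_{\cyc(G_n)}\mathbf1_e$, and this holds with the projection taken in either ambient space, since $\cyc(G_n)$ is the same subspace in both. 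Hence
\[
Y_n(e,\cdot)\longrightarrow \mathbf1_e-P_{\cyc(G)}\mathbf1_e=P_{\cyc(G)^\perp}\mathbf1_e=\iF
\]
in $\ell^2(E,c^{-1})$. Since the weights are positive, this convergence holds in particular coordinatewise.

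\emph{Step 3 (conclusion).} Determinants of the fixed finite minors $(Y_n(e,f))_{e,f\in B}$ are continuous in the entries, so they converge to $\det(Q_B)$. This identifies the FSF as the DPM with kernel $\iF(f)$, that is, as $\PP^{\cyc(G)^\perp}$.

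The main obstacle is Step 1: one must make sure the finite transfer-current theorem matches exactly the normalization $Q_H(x,y)=(P_H\mathbf1_x)(y)$ in the weighted space. Concretely, this means confirming that the unit current is $P_{\cyc^\perp}\mathbf1_e$ for the inner product with $c^{-1}$, and checking orientation conventions, since the determinant is insensitive to simultaneous sign changes of rows and columns. Step 2 is then routine Hilbert-space monotone convergence.
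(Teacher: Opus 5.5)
Your proof is correct. The paper gives no argument of its own for this statement and only cites \cite{LP}; your route is the standard proof behind that citation (Section~9.1 and Chapter~10 of \cite{LP}): the finite Transfer Current Theorem along an exhaustion, combined with strong convergence of the projections onto the increasing cycle spaces $\cyc(G_n)\uparrow\cyc(G)$, so that the finite currents converge to $\iF$.
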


\begin{remark}
See \cite[Section~9.1]{LP} for the definitions of free current and $\Diamond^\perp(G)$. Usually currents are represented
by antisymmetric functions on oriented edges, but we could choose an orientation for each edge and identify such functions with functions on the unoriented edge set. Multiple edges are kept distinct. Self-loops are included in the cycle space, but are absent from any spanning forest.
\end{remark}

Let $\PP$ stand for the FSF measure. Fix an edge $e$ such that $\PP[e\in\FSF]\in(0,1)$. Let $\mathcal S\sim \PP[\;\cdot\mid e\in\FSF]$, and $\mathcal T \sim \PP[\;\cdot\mid e\notin\FSF]$. These conditional laws are the weak limits of the corresponding conditional spanning-tree measures along a finite exhaustion. Indeed, both conditioning events are cylinder events with positive limiting probability.

Write $\G_e^+$ for the network obtained by contracting $e$, and $\G_e^-$ for the network obtained by deleting $e$, with all other conductances unchanged. The finite contraction/deletion identities, followed by passage to the limit, show that $\mathcal S\setminus\{e\}$ has the FSF law on $\G_e^+$, while $\mathcal T$ has the FSF law on $\G_e^-$. Note that for $\G^+_e$, multiple edges and self loops may arise after contraction. The graph $\G_e^-$ must be connected; otherwise $e$ would be a bridge and would belong to the FSF a.s.

\begin{figure}[htbp]
  \centering
  \begin{tikzpicture}[x=1.3cm,y=1.3cm,line width=1.1pt]
  \begin{scope}[shift={(0,0)}]
    \draw (0,0)--(-1,0.65) (0,0)--(-1,-0.23)
      (0,0)--(0.76,0.88) (0,0)--(1,0) (0,0)--(0.75,-0.45);
    \draw (0,0) to[out=116,in=220] (0,1.1) to[out=-62,in=78] (0,0);
    \draw (0,0)--(-0.75,-1.34)--(0.5,-1.34)--cycle;
    \node[red,font=\Large] at (0,0) {$\times$};
     \node[red,font=\Large] at (0,0) {$\square$};
    \node at (-1,-0.7) {$\ldots$};
    \node at (0.76,-0.7) {$\ldots$};
    \node at (0,-1.72) {$\G_e^+$};
  \end{scope}

  \draw[->] (2.55,-0.25)--(1.08,-0.25)
    node[midway,above,font=\small] {contract $e$};

  \begin{scope}[shift={(3.6,-0.14)}]
    \draw (-0.52,0)--(0,1.08)--(0.52,0)--(0.34,-1.08)--(-0.35,-1.08)--cycle;
    \draw (-0.52,0)--(-1.05,0.55) (-0.52,0)--(-1.05,-0.27)
      (0.52,0)--(1.05,0.8) (0.52,0)--(1.24,0) (0.52,0)--(1.05,-0.53);
    \draw (-0.52,0)--(0.52,0) node[midway,below] {$e$};
    \node[red,font=\Large] at (-0.52,0) {$\times$};
    \node[red,font=\Large] at (0.52,0) {$\square$};
    \node at (-0.85,-0.78) {$\ldots$};
    \node at (0.95,-0.78) {$\ldots$};
    \node at (0,-1.58) {$\G$};
  \end{scope}

  \draw[->] (5.1,-0.21)--(6.37,-0.21)
    node[midway,above,font=\small] {delete $e$};

  \begin{scope}[shift={(7.7,-0.19)}]
    \draw (-0.52,0)--(0,1.08)--(0.52,0)--(0.34,-1.08)--(-0.35,-1.08)--cycle;
    \draw (-0.52,0)--(-1.05,0.55) (-0.52,0)--(-1.05,-0.27)
      (0.52,0)--(1.05,0.8) (0.52,0)--(1.24,0) (0.52,0)--(1.05,-0.53);
    \draw[gray!60,dashed] (-0.52,0)--(0.52,0);
    \node[red,font=\Large] at (-0.52,0) {$\times$};
    \node[red,font=\Large] at (0.52,0) {$\square$};
    \node at (-0.85,-0.78) {$\ldots$};
    \node at (0.95,-0.78) {$\ldots$};
    \node at (0,-1.53) {$\G_e^-$};
  \end{scope}
\end{tikzpicture}
  \caption{The original graph $\mathcal{G}$ and the altered graphs $\mathcal{G}^+_e$ and $\mathcal{G}^-_e$}
  \label{figure}
\end{figure}

\begin{lemma}\label{lem:swap}
For any $e$ such that $\PP[e\in\FSF]\in(0,1)$, $\mathcal S$ and $\mathcal T$ can be coupled such that $|\mathcal S\setminus\mathcal T|=1$ and $|\mathcal T\setminus\mathcal S|=1$ a.s.
\end{lemma}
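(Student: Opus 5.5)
We will realize both $\mathcal S$ and $\mathcal T$ as PDPMs on the common index set $E\setminus\{e\}$ (with weights $\mathfrak c$), with subspaces $H_1 \subset H_2$ of codimension one, and then apply Theorem~\ref{thm:coupling}. The edge sets of $\G_e^+$ and $\G_e^-$ are both naturally identified with $E\setminus\{e\}$. By Theorem~\ref{thm:FSF}, $\mathcal S\setminus\{e\}$ has law $\PP^{\cyc(\G_e^+)^\perp}$ and $\mathcal T$ has law $\PP^{\cyc(\G_e^-)^\perp}$, both inside $\ell^2(E\setminus\{e\},\mathfrak c^{-1})$. The orthogonal complements reverse inclusions. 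So it suffices to show
\[
\cyc(\G_e^-)\subseteq\cyc(\G_e^+),\qquad \dim\bigl(\cyc(\G_e^+)/\cyc(\G_e^-)\bigr)=1 .
\]
Then $H_1=\cyc(\G_e^+)^\perp\subset H_2=\cyc(\G_e^-)^\perp$ has codimension one. The coupling gives $\mathcal S\setminus\{e\}\subset\mathcal T$ with $|\mathcal T\setminus(\mathcal S\setminus\{e\})|=1$ a.s. Since $e\in\mathcal S$ and $e\notin\mathcal T$, this yields $|\mathcal S\setminus\mathcal T|=|\{e\}|=1$ and $|\mathcal T\setminus\mathcal S|=1$.

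\emph{Inclusion.} A finite cycle in $\G_e^-$ is a cycle of $\G$ avoiding $e$. After contraction it becomes a closed walk in $\G_e^+$, which decomposes into cycles, possibly including self-loops. Its unit flow therefore lies in $\cyc(\G_e^+)$.

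\emph{Codimension.} Write $e=(x,y)$. The cycles of $\G_e^+$ not already coming from cycles of $\G_e^-$ are those through the merged vertex that lift to paths from $x$ to $y$ in $\G_e^-$. Fix one such simple path $\gamma$, which exists because $\G_e^-$ is connected; its flow $\theta_\gamma$ lies in $\cyc(\G_e^+)$. For any other lifted path $\gamma'$, the difference $\theta_{\gamma'}-\theta_\gamma$ is the flow of a closed walk in $\G_e^-$, hence lies in $\cyc(\G_e^-)$. Thus $\cyc(\G_e^+)$ is the closure of $\cyc(\G_e^-)+\mathbb R\theta_\gamma$. A closed subspace plus a one-dimensional space is closed, so $\cyc(\G_e^+)=\cyc(\G_e^-)+\mathbb R\theta_\gamma$, and the quotient has dimension at most one.

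To show the dimension is exactly one, we need $\theta_\gamma\notin\cyc(\G_e^-)$. This is the main obstacle, since the spaces are infinite-dimensional closures and finite-support arguments do not apply directly. The plan is to use the hypothesis $\PP[e\in\FSF]<1$. Suppose $\theta_\gamma\in\cyc(\G_e^-)$. Then $\theta_\gamma+\mathbf1_e$ is the unit flow of a cycle in $\G$, so it lies in the closure of $\cyc(\G)$. Moreover $\theta_\gamma$ is a limit of finite combinations of cycles avoiding $e$, all of which also lie in $\cyc(\G)$, and $\cyc(\G)$ is closed. Hence $\mathbf1_e\in\cyc(\G)$, which gives $\iF=P_{\cyc(\G)^\perp}\mathbf1_e=0$ and so $\PP[e\in\FSF]=\iF(e)=0$, contradicting $\PP[e\in\FSF]>0$.

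If instead the quotient were trivial, the two laws would coincide, so $\mathcal T$ and $\mathcal S\setminus\{e\}$ would have the same law. The dimension argument above rules this out. Alternatively, equal laws can be excluded by comparing the expected numbers of edges in a finite window, but the projection argument above is the cleaner route. The remaining checks are that self-loops and multiple edges created by contraction are handled consistently by Theorem~\ref{thm:FSF}, and that the identification of the two edge sets respects the weights; both are routine.
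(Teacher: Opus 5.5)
Your proof is correct and is essentially the paper's argument, written on the cycle-space side rather than its orthogonal complement. The paper shows $H_1=\ker(L|_{H_2})$ with $L(i)=\langle i,\theta_\gamma\rangle$ (via potentials) and uses the restricted free current $j$ to witness $L\not\equiv 0$ on $H_2$; this is dual to your $\cyc(\G_e^+)=\cyc(\G_e^-)+\mathbb R\theta_\gamma$ with $\theta_\gamma\notin\cyc(\G_e^-)$, which both arguments force through $\iF(e)>0$. One small slip: the nondegeneracy step uses $\PP[e\in\FSF]>0$, not $\PP[e\in\FSF]<1$ as you announce; the hypothesis $\PP[e\in\FSF]<1$ enters only through the connectivity of $\G_e^-$.
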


\begin{proof}
Let $\mathcal S'$ and $\mathcal T'$ have the FSF laws on $\G_e^+$ and $\G_e^-$. It suffices to couple them so that $\mathcal S'\subset\mathcal T'$ and $|\mathcal T'\setminus\mathcal S'|=1$ a.s. We then set $\mathcal S=\mathcal S'\cup\{e\}$ and $\mathcal T=\mathcal T'$.

By Theorem~\ref{thm:FSF}, these laws correspond to the closed subspaces $H_1=\cyc^\perp(\G_e^+)$ and $H_2=\cyc^\perp(\G_e^-)$. Every finite cycle in $\G_e^-$ is a cycle flow in $\G_e^+$, so
\[
H_1=\cyc^\perp(\G_e^+)\subset\cyc^\perp(\G_e^-)=H_2
\subset\ell^2(\E\setminus\{e\},\mathfrak{c}^{-1}).
\]

Choose a finite simple path between the two endpoints of $e$ in $\G_e^-$, with edges $u_1,\ldots,u_n$. Write $\varepsilon_k=1$ or $-1$ according to whether the path traverses $u_k$ along or against its chosen orientation. For $i\in H_2$, set
\[
L(i)=\sum_{k=1}^n\varepsilon_k \mathfrak{c}^{-1}(u_k)i(u_k).
\]
Orthogonality to finite cycles makes this path integral independent of the chosen path. Equivalently, $i\in H_2$ is a $\mathfrak{c}$-gradient of a function on $\V$. Such a gradient belongs to $H_1$ precisely when its potential takes the same value at the two endpoints of $e$, i.e. $L(i)=0$. Thus $H_1=\ker(L|_{H_2})$. 

To see that this kernel has codimension one, let $j$ be the restriction of $\iF$ to $\E\setminus\{e\}$. We have $j\in H_2$, and orthogonality of $\iF$ to the cycle formed by the path and $e$ gives
\[
L(j)=\mathfrak{c}^{-1}(e)\iF(e)>0,
\]
since $\iF(e)=\PP[e\in\FSF]>0$. In particular, $j\notin H_1$. Moreover, for every $i\in H_2$,
\[
i-\frac{L(i)}{L(j)}\,j\in\ker(L|_{H_2})= H_1.
\]
Hence $H_1$ has codimension one in $H_2$. This is the single degree of freedom described above: the potential difference between the two endpoints of $e$.

Applying Theorem~\ref{thm:coupling} on $\E\setminus\{e\}$ now gives the required coupling of $\mathcal S'$ and $\mathcal T'$.
\end{proof}
\begin{remark}\label{rem:wsfswap}
The same proof applies to the WSF as well. WSF is the PDPM $\mathbb{P}^\star$ (see Section 9.1 and Theorem 10.5 from \cite{LP}), and the one-dimensional difference between the two $\star$ spaces can be spanned by the projection of the star current centered at one endpoint of $e$ in $\G_e^-$ onto the orthogonal complement of the smaller $\star$ space. 

An additional obstacle in WSF is that $\mathcal{G}^-_e$ may have more than one  connected component. We take WSF on such a graph to be the union of independent WSFs on those components, and the projection kernel is simply block diagonal.
This is also the limit obtained by wiring the finite approximations of the components to one common boundary vertex: a spanning tree of their union is exactly an independent choice of one spanning tree in each piece.
\end{remark}

\begin{corollary}\label{cor:conditioning}
Let $\mu$ be FSF or WSF, and let $A$ be a swap-invariant event. For every edge $e$ with $\mu(e\in F)\in(0,1)$,
\[
\mu(A\mid e\in F)=\mu(A\mid e\notin F)=\mu(A).
\]
\end{corollary}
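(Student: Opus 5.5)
The plan is to read the corollary directly off Lemma~\ref{lem:swap} (and Remark~\ref{rem:wsfswap} for the WSF), and then finish with the law of total probability. Fix $e$ with $\mu(e\in F)\in(0,1)$. Let $(\mathcal S,\mathcal T)$ be the coupling from Lemma~\ref{lem:swap}, so that $\mathcal S\sim\mu(\,\cdot\mid e\in F)$ and $\mathcal T\sim\mu(\,\cdot\mid e\notin F)$, with $|\mathcal S\setminus\mathcal T|=|\mathcal T\setminus\mathcal S|=1$ almost surely.

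The first step is to check that this coupling is exactly a single swap in the sense of the definition. Since $\mathcal S=\mathcal S'\cup\{e\}$ and $\mathcal T=\mathcal T'$ with $\mathcal S'\subset\mathcal T'$ and $|\mathcal T'\setminus\mathcal S'|=1$, we get $\mathcal S\setminus\mathcal T=\{e\}$ and $\mathcal T\setminus\mathcal S=\{f\}$ for a single edge $f$. Both $\mathcal S$ and $\mathcal T$ are almost surely spanning forests of $\G$, being samples of the conditioned measure $\mu$. Swap-invariance therefore gives $\mathbf1_A(\mathcal S)=\mathbf1_A(\mathcal T)$ almost surely. Taking expectations in the coupling,
\[
\mu(A\mid e\in F)=\PP[\mathcal S\in A]=\PP[\mathcal T\in A]=\mu(A\mid e\notin F).
\]

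The second step is to write $p=\mu(e\in F)$ and expand
\[
\mu(A)=p\,\mu(A\mid e\in F)+(1-p)\,\mu(A\mid e\notin F).
\]
The two conditional probabilities are equal, so both equal $\mu(A)$.

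The only point needing care, and I expect it to be the main obstacle, is the WSF case. There $\G_e^-$ may be disconnected, and one must confirm that $\mathcal T$ is still a spanning forest of $\G$ with the correct law. Remark~\ref{rem:wsfswap} supplies this: the WSF on $\G_e^-$ is the independent union over its components, and it agrees with the limit of the conditioned finite wired measures. The PDPM coupling of Theorem~\ref{thm:coupling} then applies verbatim to the $\star$ spaces. A second minor point is measurability: $A$ must be a measurable event so that $\mathbf1_A(\mathcal S)$ and $\mathbf1_A(\mathcal T)$ are random variables. This is implicit in the statement.
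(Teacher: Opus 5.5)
Your proposal is correct and follows the paper's proof essentially verbatim. The coupling from Lemma~\ref{lem:swap} (or Remark~\ref{rem:wsfswap} for the WSF) yields two spanning forests differing by exactly one swap, so swap-invariance forces the two conditional probabilities to agree, and the law of total probability then identifies both with $\mu(A)$; your explicit check that $\mathcal S\setminus\mathcal T=\{e\}$ and that $\mathcal T\setminus\mathcal S$ is a single edge spells out a detail the paper leaves implicit.
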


\begin{proof}
For FSF use Lemma~\ref{lem:swap}, and for WSF use Remark~\ref{rem:wsfswap}. Both coupled configurations differ by a swap, so their indicators of $A$ agree. Averaging the probabilities gives the last equality.
\end{proof}

\begin{remark}\label{rem:finiteconditioning}
The same argument applies after finitely many edge states have been specified. To compare the two further conditional forests on the original graph, we undo the previous contractions and restore the edges already known to be present. Both forests then have exactly the prescribed revealed edges and differ by one swap, so their event indicators agree. 
\end{remark}

We now apply the corollary successively as the edges are revealed.
\begin{proof}[Proof of Theorem~\ref{thm:main}]
Fix a swap-invariant event $A$ and let $F$ have law $\mu$. List the edges as $e_1,e_2,\ldots$, and let $\mathcal F_n=\sigma(\mathbf1_{\{e_1\in F\}},\ldots,\mathbf1_{\{e_n\in F\}})$, with $\mathcal F_0$ trivial. Write $\EE$ for the respective expectation. Then the bounded martingale $\EE[\mathbf1_A(F)\mid\mathcal F_n]$ converges a.s.\ to $\mathbf1_A(F)$.

Fix an atom of $\mathcal F_n$. If the conditional probability that $e_{n+1}$ is present is $0$ or $1$, revealing it gives no further information on this atom. Otherwise, Corollary \ref{cor:conditioning} and Remark~\ref{rem:finiteconditioning} show that
\[
\EE[\mathbf1_A(F)\mid\mathcal F_{n+1}]=\EE[\mathbf1_A(F)\mid\mathcal F_n]
\qquad\text{a.s.\ for every }n\geq0.
\]
Passing to the limit gives
$\mu(A)
=\EE[\mathbf1_A(F)\mid\mathcal F_0]
=\lim_{n\to\infty}\EE[\mathbf1_A(F)\mid\mathcal F_n]
=\mathbf1_A(F)\in\{0,1\}
\ \text{a.s.}$
\end{proof}

\section{Applications}\label{sec:applications}
We start with a trivial extension of the main theorem. 
\begin{corollary}\label{cor: main}
    Let $\G$ be a connected, locally finite, infinite network, and let $\mu$ be its FSF (resp.\ WSF) measure. Every measurable swap-invariant statistic with values in a standard Borel space is $\mu$-a.s.
constant.
\end{corollary}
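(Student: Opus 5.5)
The plan is to reduce Corollary~\ref{cor: main} to Theorem~\ref{thm:main} through a countable separating family of Borel sets. Let $X$ be the statistic: a measurable map from spanning-forest configurations (subsets of $\E$ with the product $\sigma$-field) into a standard Borel space $(Y,\mathcal B)$. Swap-invariance of the statistic will mean $X(F)=X(F')$ whenever $|F\setminus F'|=|F'\setminus F|=1$.

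First, for every $B\in\mathcal B$ the event $\{X\in B\}$ is measurable. It is also swap-invariant, since $\mathbf 1_B(X(F))=\mathbf 1_B(X(F'))$ for swapped forests. Theorem~\ref{thm:main} therefore gives $\mu(X\in B)\in\{0,1\}$, so the pushforward $\nu=\mu\circ X^{-1}$ is a $\{0,1\}$-valued probability measure on $(Y,\mathcal B)$.

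Second, I show such a measure is a point mass. This is the only step that uses standard Borelness, and it is standard but is where care is needed. By Kuratowski's theorem I may assume $Y$ is a Borel subset of $[0,1]$, or simply that $\mathcal B$ is countably generated and separates points. Take a countable separating family $\{B_k\}_{k\ge1}$, for instance the dyadic intervals. For each $k$, replace $B_k$ by its complement if needed so that $\nu(C_k)=1$, where $C_k\in\{B_k,B_k^c\}$. Then $C=\bigcap_k C_k$ has $\nu(C)=1$ by countable intersection. Any two points of $C$ lie on the same side of every $B_k$, so by separation $C$ has at most one point. Since $\nu(C)=1$, it is a singleton $\{y_0\}$, and so $X=y_0$ $\mu$-a.s.

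Nothing else is needed. The only subtlety is that singletons are measurable and the separating family is countable, and both are guaranteed by the standard Borel hypothesis.
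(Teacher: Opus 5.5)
Your proof is correct and is the standard argument the paper has in mind: it calls the corollary a ``trivial extension'' of Theorem~\ref{thm:main} and gives no proof. You apply the theorem to each swap-invariant event $\{X\in B\}$ and then use a countable separating family to show that a $\{0,1\}$-valued Borel probability measure on a standard Borel space is a point mass, which is exactly the omitted reasoning.
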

\paragraph{Number of components.} Our first application answers the component-count question of Benjamini, Lyons, Peres, and Schramm \cite[Question~15.7]{BLPS}, also recorded in \cite[Question~10.28]{LP}.
\begin{corollary}\label{cor:components}
The number of components in FSF or WSF is a.s.\ constant.
\end{corollary}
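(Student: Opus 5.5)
We apply Corollary~\ref{cor: main} to the statistic $N(F)\in\{1,2,\ldots\}\cup\{\infty\}$, the number of connected components of the spanning forest $F$. Its value space is countable with the discrete topology, so it is a standard Borel space. Two things remain to check: that $N$ is measurable, and that it is swap-invariant.

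\textbf{Measurability.} Fix an enumeration $v_1,v_2,\ldots$ of $\V$. Then
\[
N(F)\ge k \iff \text{there exist indices } i_1<\dots<i_k \text{ such that the } v_{i_j} \text{ lie in pairwise distinct components}.
\]
The event that two vertices are connected in $F$ is a countable union, over finite paths, of cylinder events. Hence $\{N\ge k\}$ is measurable for every $k$, and so is $N$.

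\textbf{Swap-invariance.} This is the main point. Let $F'=(F\setminus\{f\})\cup\{g\}$, where both $F$ and $F'$ are spanning forests. I would argue via an intermediate forest. Removing $f$ from $F$ splits exactly one component into two, since $F$ is acyclic; so $F\setminus\{f\}$ has $N(F)+1$ components, with the convention $\infty+1=\infty$. Adding $g$ to $F\setminus\{f\}$ produces $F'$, which is acyclic. Therefore $g$ cannot join two vertices of the same component of $F\setminus\{f\}$, since that would create a cycle. So $g$ merges two distinct components, and
\[
N(F')=\bigl(N(F)+1\bigr)-1=N(F).
\]
The counting is valid with $\infty$ because removing or adding a single edge changes the set of components in a finite, explicit way. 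For each $k$, the event $\{N=k\}$ is therefore swap-invariant in the sense of the definition. Self-loops never appear in spanning forests, so they cause no issue.

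\textbf{Conclusion.} By Corollary~\ref{cor: main}, or directly by Theorem~\ref{thm:main} applied to each event $\{N=k\}$ (countably many, with probabilities summing to one), $N$ is $\mu$-a.s.\ constant for $\mu$ either the FSF or the WSF. I expect no real obstacle. The only points needing care are the acyclicity argument that rules out $g$ closing a cycle, and the bookkeeping for an infinite number of components.
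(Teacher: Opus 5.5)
Your proof is correct and follows essentially the same route as the paper. The paper counts components of the common forest $\mathcal S\cap\mathcal T$, which is your $F\setminus\{f\}$, observes that acyclicity forces each single inserted edge to join two distinct components, and then invokes Corollary~\ref{cor: main}; your explicit measurability argument fills in a step the paper only asserts.
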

\begin{proof}
Let $\mathcal S,\mathcal T$ differ by a single swap, and let $N$ be the number of components of the spanning forest $\mathcal S\cap\mathcal T$. Each of $\mathcal S$ and $\mathcal T$ is obtained by adding one edge to this common forest. Since neither contains a cycle, each added edge joins two distinct components. If $N<\infty$, both forests therefore have $N-1$ components; if $N=\infty$, both have infinitely many components. Thus the component count is swap-invariant. It is measurable, and Corollary~\ref{cor: main} applies.
\end{proof}

Corollary~\ref{cor:components} establishes constancy but does not characterize the number of FSF components in terms of the underlying network. We pursue such a characterization in separate work \cite{LiYu}. We also remark that whether the FSF on every transitive network has either one or infinitely many components \cite[Question~15.6]{BLPS}, \cite[Question~10.29]{LP} remains open in the nonunimodular case; the unimodular case was settled by Hutchcroft and Nachmias \cite{HN} and independently by Tim\'ar \cite{Timar}.

\paragraph{Excessive ends.}
An \textbf{end} of a locally finite tree is an equivalence class of infinite simple rays, where two rays are equivalent if their tails lie in the same component after any finite vertex deletion. Write $\operatorname{ends}(T)\in\{1,2,\ldots,\infty\}$ for the number of ends of an infinite tree, recording any infinite cardinality as $\infty$. 
We call a spanning forest \textbf{essential} if all of its components are infinite. In particular, FSF and WSF are supported on such forests. For an essential spanning forest $F$, define
\[
\xi(F)=\sum_{T\in\operatorname{Comp}(F)}\bigl(\operatorname{ends}(T)-1\bigr)
\in\{0,1,2,\ldots,\infty\}.
\]

\begin{corollary}\label{cor:ends}
For both FSF and WSF, the total number of excessive ends $\xi(F)$ is a.s.\ constant. In particular, the event that every component is one-ended has probability zero or one.
\end{corollary}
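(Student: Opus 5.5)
The plan is to apply Corollary~\ref{cor: main} to $\xi$, viewed as a map into $\{0,1,2,\ldots,\infty\}$, which is a standard Borel space. Two things need checking: $\xi$ is measurable on essential spanning forests, and $\xi(\mathcal S)=\xi(\mathcal T)$ whenever $\mathcal S,\mathcal T$ are essential spanning forests differing by one swap. Since FSF and WSF are supported on essential forests, we may redefine $\xi$ arbitrarily, say as $0$, off this set. The resulting function is still swap-invariant on the support, and the zero--one law applies to each level set $\{\xi=k\}$. Strictly, the definition of swap-invariance quantifies over all spanning forests. We handle this by taking the event $A_k=\{F \text{ essential},\ \xi(F)=k\}\cup\{F\text{ not essential},\ \xi'(F)=k\}$ for a suitable extension $\xi'$. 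The simplest route is to extend $\xi$ to all forests by summing $(\operatorname{ends}(T)-1)$ over the infinite components only, with finite components contributing nothing. We then verify swap-invariance in that generality; the ends argument below works equally well.

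For measurability, the number of ends of the component of a vertex $v$ is determined by counting, for each $r$, the infinite components of $F$ minus the ball of radius $r$ that are attached to the component of $v$. Being infinite is a countable intersection of cylinder conditions, and there are countably many vertices. So $\xi$ is a countable sum or supremum of measurable functions, hence measurable. This part is routine.

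The main step is swap-invariance. Let $F_0=\mathcal S\cap\mathcal T$, so $\mathcal S=F_0\cup\{a\}$ and $\mathcal T=F_0\cup\{b\}$. Each added edge joins two distinct components of $F_0$. Removing one edge $a$ from a tree $T$ splits it into $T_1,T_2$, and $\operatorname{ends}(T)=\operatorname{ends}(T_1)+\operatorname{ends}(T_2)$, where finite pieces have zero ends. This holds because ends are determined outside a finite set, and the edge $a$ is a finite cut. I will define $\eta(F)=\sum_{T\text{ infinite}}(\operatorname{ends}(T)-1)$ and compute the change when one edge joins two components $C_1,C_2$.

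If both are infinite, the joined tree has $\operatorname{ends}(C_1)+\operatorname{ends}(C_2)$ ends. The excess changes from $(e_1-1)+(e_2-1)$ to $e_1+e_2-1$, so $\eta$ increases by exactly $1$. If exactly one is infinite, $\eta$ is unchanged, and if both are finite, $\eta$ is unchanged.

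This is where the obstacle sits. For essential forests $\mathcal S,\mathcal T$, the forest $F_0$ has at most one finite component, because removing one edge from an essential forest can create at most one finite piece. The two swaps could a priori be of different types. I handle this by comparing through $\eta(F_0)$, splitting into cases according to whether $F_0$ has a finite component. If $F_0$ is essential, both $a$ and $b$ join two infinite components, so $\eta(\mathcal S)=\eta(F_0)+1=\eta(\mathcal T)$. If $F_0$ has a finite component $K$, then $\mathcal S$ being essential forces $a$ to have an endpoint in $K$. Otherwise $K$ would survive as a component of $\mathcal S$. The other endpoint of $a$ then lies in an infinite component, since $a$ joins distinct components and $K$ is the only finite one. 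The same holds for $b$. Hence $\eta(\mathcal S)=\eta(F_0)=\eta(\mathcal T)$.

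When $\xi(F_0)=\infty$, all of these equalities hold trivially in $\{0,\ldots,\infty\}$. So $\xi$ agrees on swap pairs of essential forests. Applying Corollary~\ref{cor: main}, with the events restricted to essential forests, gives a.s.\ constancy. Finally, every component is one-ended if and only if $\xi=0$, which gives the last claim.
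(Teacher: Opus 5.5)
Your main step is the paper's proof. With $F_0=\mathcal S\cap\mathcal T$, which has at most one finite component, you split on whether that finite component exists. If it does, both inserted edges attach it to an infinite tree and $\xi$ is unchanged. If not, both inserted edges merge two infinite trees and raise $\xi$ by exactly one, or leave it at $\infty$. Your measurability sketch fills in what the paper calls routine.

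One claim is wrong: your repair of the technicality that swap-invariance is required over \emph{all} spanning forests. The extension $\eta(F)=\sum_{T\text{ infinite}}(\operatorname{ends}(T)-1)$ is not swap-invariant there, and the ends argument does not ``work equally well''. Suppose $F_0$ has one-ended infinite components $C_1,C_2$ and a finite component $K$, with $a$ joining $C_1$ to $C_2$ and $b$ joining $K$ to $C_1$. Then $F_0\cup\{a\}$ and $F_0\cup\{b\}$ are spanning forests differing by one swap, with $\eta$ equal to $1$ and $0$ respectively. Likewise the event $\{F\text{ essential},\ \xi(F)=k\}$ is not swap-invariant, since a swap can leave the essential forests. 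So ``restricting the events to essential forests'' does not by itself meet the hypothesis of Corollary~\ref{cor: main}.

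Either of two fixes works.
\begin{itemize}
\item Count each finite component as $-1$, i.e.\ use $\sum_T(\operatorname{ends}(T)-1)$ with $\operatorname{ends}(T)=0$ for finite $T$. Restrict to forests with finitely many finite components; swaps preserve this class, since adding or removing an edge changes that number by at most one. On this class, adding any edge between distinct components raises the sum by exactly one (or leaves it at $\infty$). Hence it is swap-invariant and agrees with $\xi$ on essential forests. Off the class, set it to a fixed extra value.
\item Note that the proof of Theorem~\ref{thm:main} only uses $\mathbf 1_A(\mathcal S)=\mathbf 1_A(\mathcal T)$ for the coupled pairs of Lemma~\ref{lem:swap} and Remark~\ref{rem:finiteconditioning}. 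Their marginals are conditional laws of $\mu$ given positive-probability cylinder events, hence a.s.\ essential.
\end{itemize}
The paper itself checks only essential pairs before invoking Corollary~\ref{cor: main}, so it tacitly relies on something like the second observation.
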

\begin{proof}
Suppose that $F,F'$ are essential and differ by one swap, and put $C=F\cap F'$. Deleting one edge from an infinite tree creates at most one finite component. If $C$ has one, each of the two inserted edges must join that finite component to an infinite one, since both resulting forests are essential. Attaching a finite tree does not change the ends of the infinite component. Thus $\xi(F)$ and $\xi(F')$ both equal the sum of the excesses of the infinite components of $C$.
If $C$ has no finite component, each insertion joins two infinite trees. The set of ends of the resulting tree is the disjoint union of their sets of ends. If the sum of excesses in $C$ is finite, each insertion increases it by exactly one; if it is infinite, it remains infinite. Hence $\xi(F)=\xi(F')$ in this case as well.
The statistic is measurable, so Corollary~\ref{cor: main} applies. The final assertion is the case $\xi(F)=0$.
\end{proof}

Corollary~\ref{cor:ends} answers the remaining FSF case of Question~10.59 of Lyons and Peres \cite{LP}: the event that every component is one-ended is precisely $\{\xi(F)=0\}$ and hence has probability zero or one. The WSF case was already settled by Hutchcroft \cite[Theorem~2.2]{Hutchcroft}. Our constancy theorem for $\xi(F)$ gives a stronger conclusion for the FSF.

We also remark that Lyons, Morris, and Schramm observed that the total number of ends of all components is tail measurable, hence a.s.\ constant \cite[after Question~7.8]{LMS}. Combined with Corollary~\ref{cor:components}, this gives another proof of Corollary~\ref{cor:ends} when the number of components is finite. Swap invariance cannot simply be dropped: in their example from $\mathbb Z^5$ with two rays attached, the WSF has a component with exactly two ends with probability strictly between zero and one \cite[Section~7]{LMS}.

\paragraph{Properties of all components.}
The argument for Corollary~\ref{cor:ends} also applies to properties of trees that are stable under splitting and joining. The recurrence and finite-endedness events below are also unchanged by arbitrary finite modifications. Their zero--one laws thus already follow from tail triviality \cite[Theorem~8.3]{BLPS}; we include them as examples of the general criterion.
\begin{corollary}\label{cor:hereditary}
Let $\mathsf P$ be a property of locally finite trees equipped with positive edge conductances, including finite trees, such that:
(i) if a tree $T$ has $\mathsf P$ and $e\in T$, then both components of $T\setminus\{e\}$ have $\mathsf P$; and
(ii) if disjoint trees $T_1,T_2$ have $\mathsf P$ and $e$ joins them, then $T_1\cup T_2\cup\{e\}$ has $\mathsf P$ for every positive conductance assigned to $e$.
Then the event that every component of $F$ has $\mathsf P$ has probability zero or one under FSF and under WSF. In particular, this holds for recurrence of the components, with the restricted conductances, and for having finitely many ends.
\end{corollary}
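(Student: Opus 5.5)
The plan is to verify that the event $A_{\mathsf P}=\{\text{every component of }F\text{ has }\mathsf P\}$ is swap-invariant and measurable, and then invoke Theorem~\ref{thm:main}. Let $F,F'$ be spanning forests differing by one swap, and put $C=F\cap F'$, so that $F=C\cup\{e\}$ and $F'=C\cup\{e'\}$. By symmetry it suffices to show that $A_{\mathsf P}(F)$ implies $A_{\mathsf P}(F')$.

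Assume every component of $F$ has $\mathsf P$. The edge $e$ lies in a single component $T$ of $F$. By (i), both components of $T\setminus\{e\}$ have $\mathsf P$. The other components of $F$ are unchanged in $C$, so every component of $C$ has $\mathsf P$. Now insert $e'$. Since $F'$ contains no cycle, $e'$ joins two distinct components $T_1,T_2$ of $C$. By (ii), applied with the conductance $\mathfrak c(e')$, the tree $T_1\cup T_2\cup\{e'\}$ has $\mathsf P$, and the remaining components of $F'$ are components of $C$. Hence $A_{\mathsf P}(F')$ holds. Here trees carry the conductances restricted from $\G$, which is why (ii) is stated for every positive conductance. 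Note also that finite trees must be allowed in $\mathsf P$, because $C$ may have a finite component; this is why the hypothesis includes them.

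The main obstacle is measurability of $A_{\mathsf P}$, since $\mathsf P$ is an abstract property. I would either assume, as implicit in the statement, that $A_{\mathsf P}$ is measurable, or check it directly for the two examples.

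\emph{Recurrence.} The component of $v$ is recurrent iff the effective conductance from $v$ to infinity in it is zero. This conductance is a limit of finite-volume effective conductances, each determined by finitely many edge states, so it is measurable. Taking the countable intersection over $v\in\V$ gives measurability of $A_{\mathsf P}$.

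\emph{Finitely many ends.} The component of $v$ has at most $k$ ends iff for every $n$ there is $m$ such that, after removing the ball of radius $n$ in the component, at most $k$ of the remaining pieces reach distance $m$. This condition is expressed by countable operations on cylinder events. Taking the union over $k$ and the intersection over $v$ gives measurability.

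Finally, I would verify (i) and (ii) for the two examples. For recurrence, (i) holds because a subtree of a recurrent tree is recurrent by Rayleigh monotonicity. For (ii), joining two recurrent trees by one edge gives a recurrent tree, since the effective conductance to infinity from a vertex is bounded by the sum of contributions through the two sides, each of which is zero. For finitely many ends, removing an edge splits the ends of $T$ between the two components, and joining two trees gives a tree whose set of ends is the disjoint union of theirs. Both properties include finite trees trivially. Theorem~\ref{thm:main} then yields the zero--one law under both FSF and WSF.
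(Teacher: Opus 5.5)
Your proposal is correct and matches the paper's argument. The paper proves this corollary by the same split-then-rejoin check used for Corollary~\ref{cor:ends}: delete the swapped-out edge and apply (i) to the two pieces, then observe that the inserted edge joins two distinct components and apply (ii), and finally invoke Theorem~\ref{thm:main}. Your measurability checks and your verification of (i)--(ii) for recurrence and finite-endedness fill in details the paper leaves implicit; the paper also notes that these two examples already follow from tail triviality.
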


When edge conductances are bounded above, every WSF component is recurrent a.s.\ by Morris \cite[Corollary~10]{Morris}; in particular, this holds for unit conductances. For the FSF on a unimodular random rooted network, on the event that $\FSF\neq\WSF$, every component is a.s.\ transient and infinitely ended \cite[Theorem~1.12]{HN}. Corollary~\ref{cor:hereditary} gives a zero--one law on every fixed connected, locally finite, infinite network, without symmetry or a uniform bound on the conductances.

\end{document}